\documentclass[10pt,a4paper,oneside]{article}
\usepackage[english]{babel}
\usepackage{a4wide,amsmath,amsthm,amssymb,url,graphicx,xspace,algorithm,algorithmic,pgf}
\usepackage[latin1]{inputenc}
\usepackage{enumitem}
\usepackage{multirow}
\usepackage{hyperref}
\usepackage{tabularx}
\usepackage{tikz}
\usepackage{allrunes}

\DeclareMathOperator{\PG}{PG}
\DeclareMathOperator{\PGL}{PGL}

\DeclareMathOperator{\row}{row}
\DeclareMathOperator{\rk}{rk}

\theoremstyle{definition}
\newtheorem{theorem}{Theorem}[section]
\newtheorem{lemma}[theorem]{Lemma}
\newtheorem{definition}[theorem]{Definition}
\newtheorem{remark}[theorem]{Remark}

\newtheorem{example}[theorem]{Example}

\newcommand{\comments}[1]{}

\author{Maarten De Boeck\footnote{Address: Ghent University, Department of Mathematics, Krijgslaan 281, 9000 Gent, Flanders, Belgium. \newline Email address: \{mdeboeck,ls\}@cage.ugent.be, Website: http://cage.ugent.be/$\sim$ls}, Leo Storme\footnotemark[1] and Andrea \v{S}vob\footnote{Address: University of Rijeka, Department of Mathematics, Radmile Matej\v{c}i\'c 2, 51000 Rijeka, Croatia.  \newline Email address: asvob@math.uniri.hr, Website: http://www.math.uniri.hr/$\sim$asvob/}}
\title{The Cameron-Liebler problem for sets}
\date{}
\begin{document}
\maketitle

\begin{abstract}
 Cameron-Liebler line classes and Cameron-Liebler $k$-classes in $\PG(2k+1,q)$ are currently receiving a lot of attention. Here, links with the Erd\H{o}s-Ko-Rado results in finite projective spaces occurred. We introduce  here in this article the similar problem on Cameron-Liebler classes of sets, and solve this problem completely, by making links to the classical Erd\H{o}s-Ko-Rado result on sets. We also present a characterisation theorem for the Cameron-Liebler classes of sets.
\end{abstract}

 \paragraph*{Keywords:} Cameron-Liebler set, Erd\H os-Ko-Rado problem
 \paragraph*{MSC 2010 codes:} 05A18, 05B25, 51E20

\section{Introduction}\label{sec:introduction}

In \cite{cl}, Cameron and Liebler investigated the orbits of the projective groups $\PGL(n+1,q)$. For this purpose they introduced line classes in the projective space $\PG(3,q)$ with a specific property, which afterwards were called \emph{Cameron-Liebler line classes}. A Cameron-Liebler line class $\mathcal{L}$ with parameter $x$ in $\PG(3,q)$ is a set of $x(q^{2}+q+1)$ lines in $\PG(3,q)$ such that any line $\ell\in\mathcal{L}$ meets precisely $x(q+1)+q^{2}-1$ lines of $\mathcal{L}$ in a point and such that any line $\ell\notin\mathcal{L}$ meets precisely $x(q+1)$ lines of $\mathcal{L}$ in a point.
\par Many equivalent characterisations are known, of which we present one. For an overview we refer to \cite[Theorem 3.2]{drudge}. A \emph{line spread} of $\PG(3,q)$ is a set of lines that form a partition of the point set of $\PG(3,q)$, i.e. each point of $\PG(3,q)$ is contained in precisely one line of the line spread. The lines of a line spread are necessarily pairwise skew. Now a line set $\mathcal{L}$ in $\PG(3,q)$ is a Cameron-Liebler line class with parameter $x$ if and only if it has $x$ lines in common with every line spread of $\PG(3,q)$.
\par The central problem for Cameron-Liebler line classes in $\PG(3,q)$, is to determine for which parameters $x$ a Cameron-Liebler line class exists, and to classify the examples admitting a given parameter $x$. 
Constructions of Cameron-Liebler line classes and characterisation results were obtained in \cite{bd,cl,ddmr,TF:14,met,rod}. Recently several results were obtained through a new counting technique, see \cite{gmet,gm,met2}. A complete classification is however not in sight.
\par Also recently, Cameron-Liebler $k$-classes in $\PG(2k+1,q)$ were introduced in \cite{rsv} and Cameron-Liebler line classes in $\PG(n,q)$ were introduced in \cite{gm}. Both generalise the classical Cameron-Liebler line classes in $\PG(3,q)$.
\par Before describing the central topic of this article, we recall the concept of a $q$-analogue. In general a $q$-analogue is a mathematical identity, problem, theorem,...,  that depends on a variable $q$ and that generalises a known identity, problem, theorem,..., to which it reduces in the (right) limit $q\to1$. In a combinatorial/geometrical setting it often arises by replacing a set and its subsets by a vector space and its subspaces. E.g. the $q$-binomial theorem is a $q$-analogue of the classical binomial theorem. In recent years there has been a lot of attention for $q$-analogues, see \cite{bbsw} amongst others.
\par The Cameron-Liebler problem has not arisen as a $q$-analogue of a problem on sets, but has an interesting counterpart on sets that we will describe and investigate in this article. The definition builds on the spread definition of the classical Cameron-Liebler line classes and uses a classical set counterpart for spreads in a projective space. A subset of size $k$ of a set will be called a {\em $k$-subset} or shortly a {\em $k$-set}.

\begin{definition}
  A \emph{$k$-uniform partition} of a finite set $\Omega$, with $|\Omega|=n$ and $k\mid n$, is a set of pairwise disjoint $k$-subsets of $\Omega$ such that any element of $\Omega$ is contained in precisely one of the $k$-subsets.
\end{definition}

Necessarily, a $k$-uniform partition of a finite set $\Omega$, with $|\Omega|=n$, contains $\frac{n}{k}$ different $k$-subsets. This definition now allows us to present the definition of a Cameron-Liebler class of $k$-sets.

\begin{definition}\label{clclass}
  Let $\Omega$ be a finite set with $|\Omega|=n$ and let $k$ be a divisor of $n$. A \emph{Cameron-Liebler class of $k$-sets with parameter $x$} is a set of $k$-subsets of $\Omega$ which has $x$ different $k$-subsets in common with every $k$-uniform partition of $\Omega$.
\end{definition}

Note that the $q$-analogue of the above definition is actually a Cameron-Liebler $(k-1)$-class in $\PG(n-1,q)$, a concept that has not been discussed before, but which is a straightforward generalisation of the Cameron-Liebler classes that have already been discussed.
\par We present two results on these Cameron-Liebler classes of subsets. In Theorem \ref{char} we show that also for Cameron-Liebler classes of subsets many equivalent characterisations can be found. The second main theorem of this paper is the following classification result.
\begin{theorem}\label{main}
  Let $\Omega$ be a finite set with $|\Omega|=n$ and let $\mathcal{L}$ be a Cameron-Liebler class of $k$-sets with parameter $x$ in $\Omega$, $k\geq2$. If $n\geq 3k$ and $\mathcal{L}$ is nontrivial, then either $x=1$ and $\mathcal{L}$ is the set of all $k$-subsets containing a fixed element or $x=\frac{n}{k}-1$ and $\mathcal{L}$ is the set of all $k$-subsets not containing a fixed element.
\end{theorem}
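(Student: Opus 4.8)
The plan is to classify $\mathcal{L}$ through its characteristic vector, combining the characterisation result with an elementary analysis of real weight functions on $\Omega$. The input I would take from Theorem~\ref{char} is that, when $n\ge 3k$, a family $\mathcal{L}$ of $k$-subsets of $\Omega$ is a Cameron-Liebler class with parameter $x$ if and only if $\chi_{\mathcal{L}}=\sum_{p\in\Omega}c_p\,\chi_p$ for some reals $c_p$, where $\chi_p$ denotes the characteristic vector of the point-pencil through $p$ (the set of all $k$-subsets containing $p$); and then automatically $x=\sum_{p\in\Omega}c_p$, since each point lies in exactly one block of a $k$-uniform partition. The easy direction of this equivalence is the displayed computation of $x$; the converse is precisely where the hypothesis $n\ge 3k$ is genuinely needed (for $n=2k$ it already fails).

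Granting this, it remains to determine all functions $c\colon\Omega\to\mathbb{R}$ for which $\sigma(S):=\sum_{p\in S}c_p\in\{0,1\}$ for every $k$-subset $S$. First I would show that $c$ takes at most two values, differing by exactly $1$: for distinct $p,p'$, using $n\ge k+1$ choose a $k$-set $S$ with $p\in S$, $p'\notin S$ and put $S'=(S\setminus\{p\})\cup\{p'\}$; then $c_{p'}-c_p=\sigma(S')-\sigma(S)\in\{-1,0,1\}$. Hence all pairwise differences of the $c_p$ are integers of absolute value at most $1$, so $c(\Omega)\subseteq\{t,t+1\}$ with $t=\min_p c_p$.

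If $c\equiv t$ is constant, then $kt=\sigma(S)\in\{0,1\}$, so $t=0$ (giving $\mathcal{L}=\emptyset$) or $t=1/k$ (giving $\mathcal{L}$ equal to the family of all $k$-subsets) --- the trivial classes. Otherwise set $A=c^{-1}(t+1)$ and $B=c^{-1}(t)$, both nonempty with $|A|+|B|=n$, so $\sigma(S)=kt+|S\cap A|$. As $S$ ranges over the $k$-subsets, $|S\cap A|$ attains exactly the integers in $I=[\max(0,k-|B|),\min(k,|A|)]$, and since all of $kt+I$ must lie in $\{0,1\}$ and these are consecutive integers, $I$ contains at most two values. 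If $|A|\ge k$ and $|B|\ge k$ then $I=[0,k]$ has $k+1\ge 3$ elements (this uses $k\ge 2$), a contradiction; so $\min(|A|,|B|)\le k-1$, and then $|A|+|B|=n\ge 3k$ forces $\max(|A|,|B|)\ge 2k+1>k$. In the case $|B|\le k-1$ we get $I=[k-|B|,k]$ (as $|A|>k$), which has $|B|+1$ elements, hence $|B|\le 1$, so $|B|=1$; writing $B=\{q\}$, the equalities $kt+(k-1)=0$ and $kt+k=1$ yield $c_q=\tfrac1k-1$ and $c_p=\tfrac1k$ for $p\ne q$, so (using $\sum_p\chi_p=k\,\mathbf{1}$, the all-ones vector) $\chi_{\mathcal{L}}=\tfrac1k\sum_p\chi_p-\chi_q=\mathbf{1}-\chi_q$ is the characteristic vector of the family of all $k$-subsets avoiding $q$, with $x=\tfrac nk-1$. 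Symmetrically, if $|A|\le k-1$ then $I=[0,|A|]$, so $|A|=1$, say $A=\{q\}$, and $kt=0$, $kt+1=1$ force $c_q=1$, $c_p=0$ for $p\ne q$, so $\mathcal{L}$ is the point-pencil through $q$ with $x=1$. (Conversely, both families are immediately seen to be Cameron-Liebler classes with these parameters from the partition definition.)

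The main obstacle is the first step: establishing --- or quoting from Theorem~\ref{char} --- that $\chi_{\mathcal{L}}$ lies in the span of the point-pencil vectors once $n\ge 3k$; everything afterwards is elementary, and it also makes the hypotheses transparent ($k\ge 2$ enters in $k+1\ge 3$, and $n\ge 3k$ is exactly what collapses $I$ to at most two consecutive values). For $x=1$ there is an alternative route directly through the classical Erd\H{o}s-Ko-Rado theorem, in line with the abstract: a nontrivial Cameron-Liebler class cannot contain $x+1$ pairwise disjoint $k$-subsets, since as $n\ge(x+1)k$ these would extend to a $k$-uniform partition meeting $\mathcal{L}$ in at least $x+1$ subsets; for $x=1$ this says $\mathcal{L}$ is intersecting, and as $|\mathcal{L}|=\binom{n-1}{k-1}$ (by double counting of incident partitions) and $n>2k$, Erd\H{o}s-Ko-Rado forces $\mathcal{L}$ to be a point-pencil.
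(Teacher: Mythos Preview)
Your argument is correct, but it follows a genuinely different route from the paper's. The paper never invokes Theorem~\ref{char} to prove Theorem~\ref{main}; instead it argues combinatorially (Lemma~\ref{main1}): starting from $x$ pairwise disjoint members $H_1,\dots,H_x$ of $\mathcal{L}$, it shows that the elements of $\mathcal{L}$ disjoint from all $H_j$ with $j\neq i$ form a maximum intersecting family in the residual set $\Omega_i$, applies the Erd\H{o}s--Ko--Rado theorem there to pin down a point $p_i\in H_i$, then uses a swapping construction to force every member of $\mathcal{L}$ through some $p_i$, and finishes with a counting contradiction for $x\ge 2$. Your approach instead leans entirely on the algebraic characterisation $\chi_{\mathcal{L}}\in\row(C)$ from Theorem~\ref{char} and reduces the problem to classifying real weights $c$ on $\Omega$ whose $k$-set sums lie in $\{0,1\}$, which you dispatch by an elementary range analysis. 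Your method is shorter and makes the roles of the hypotheses very transparent (and you are right that the equivalence $(\mathrm{i})\Leftrightarrow(\mathrm{v})$ in Theorem~\ref{char} is exactly where $n\ge 3k$ matters: for $n=2k$ and $k\ge 3$ the eigenvalue $-\binom{n-k-1}{k-1}=-1$ has multiplicity larger than $n-1$, and there exist Cameron--Liebler classes whose characteristic vectors are not in $\row(C)$). On the other hand, the paper's proof is self-contained modulo the classical EKR theorem and does not require the Kneser eigenvalue computation that underlies Theorem~\ref{char}; it also keeps a closer narrative link to the EKR theme advertised in the abstract. Your closing remark about the $x=1$ case via EKR is essentially a special case of the paper's argument.
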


\section{The classification result}\label{sec:classification}

The next result is the Erd\H{o}s-Ko-Rado theorem, a classical result in combinatorics.

\begin{theorem}[{\cite[Theorem 1]{ekr} and \cite{w}}]\label{ekr}
  If $\mathcal{S}$ is a family of $k$-subsets in a set $\Omega$ with $|\Omega|=n$ and $n\geq2k$, such that the elements of $\mathcal{S}$ are pairwise not disjoint, then $|\mathcal{S}|\leq\binom{n-1}{k-1}$. Moreover, if $n\geq2k+1$, then equality holds if and only if $\mathcal{S}$ is the set of all $k$-subsets through a fixed element of $\Omega$.
\end{theorem}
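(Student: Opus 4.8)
The plan is to establish both parts through Katona's cyclic permutation method, which produces the bound by a short double count and cleanly isolates the equality analysis as the genuinely delicate step. First I would fix terminology: for an arrangement of the $n$ elements of $\Omega$ around a circle, call a $k$-subset an \emph{arc} if its elements occupy $k$ consecutive positions. Every cyclic ordering contains exactly $n$ arcs, and there are $(n-1)!$ cyclic orderings in all. The combinatorial heart of the argument is the following lemma: \emph{if $n\ge 2k$, then among the $n$ arcs of a fixed cyclic ordering at most $k$ can be pairwise intersecting}. To prove it I would fix one arc $A$ of the intersecting family; every other member must meet $A$, hence must start within the $2k-1$ positions whose arc overlaps $A$. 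Removing the starting position of $A$ leaves $2k-2$ positions, which I would organise into $k-1$ pairs, each pair consisting of two arcs that are \emph{disjoint} precisely because $n\ge 2k$. A pairwise intersecting family can use at most one arc from each such pair, plus $A$ itself, giving the bound $k$.

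With the lemma in hand, the bound follows by counting the incident pairs $(C,S)$ where $C$ is a cyclic ordering and $S\in\mathcal S$ is an arc of $C$. On one hand, a fixed $S$ is an arc in exactly $k!(n-k)!$ cyclic orderings, as one sees by gluing $S$ into a block, arranging its $k$ elements internally and placing the resulting $n-k+1$ objects cyclically. On the other hand, by the lemma each ordering $C$ contributes at most $k$ members of $\mathcal S$. Equating the two counts yields
\[
|\mathcal S|\,k!(n-k)! \;\le\; (n-1)!\,k ,
\]
which simplifies to $|\mathcal S|\le\binom{n-1}{k-1}$.

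For the uniqueness statement I would trace the equality case, which I expect to be the main obstacle. Equality forces \emph{every} cyclic ordering to contain exactly $k$ members of $\mathcal S$, so each such family of $k$ arcs attains the maximum in the lemma. The crux is a local rigidity statement: when the inequality $n\ge 2k$ is \emph{strict}, a maximum intersecting family of $k$ arcs must consist of $k$ consecutive arcs, equivalently of all arcs through one common position. This is exactly where $n\ge 2k+1$ enters: at $n=2k$ two $k$-sets are disjoint if and only if they are complementary, so any selection of one set from each of the $\tfrac12\binom{2k}{k}$ complementary pairs is a maximum intersecting family, and most of these are not stars; hence uniqueness genuinely fails at $n=2k$ and the strict inequality is needed to pin down the common-point structure.

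Granting this local rigidity, the final step is to globalise the per-ordering "common point" data into a single fixed element of $\Omega$. The consistency of the common points across cyclic orderings that share long overlapping arcs, together with a connectivity/transitivity argument, forces one element $\omega\in\Omega$ to lie in every member of $\mathcal S$. Since $\mathcal S$ then consists of $k$-sets all containing $\omega$ and meets the bound $\binom{n-1}{k-1}$, it must be precisely the star $\{\,S : \omega\in S\,\}$, completing the characterisation. (An alternative route to uniqueness, replacing this local-to-global analysis, is the shifting/compression method, which can be substituted if the propagation argument proves cumbersome.)
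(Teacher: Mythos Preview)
The paper does not prove this theorem at all: it is stated as Theorem~\ref{ekr} with citations to \cite{ekr} and \cite{w} and then used as a black box in the proof of Lemma~\ref{main1}. So there is nothing to compare against on the paper's side.

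Your proposal is Katona's cyclic-permutation proof, and the double-counting part leading to the bound $|\mathcal S|\le\binom{n-1}{k-1}$ is correct and complete as written. The equality analysis, however, is only sketched. You correctly flag the local rigidity step (that for $n\ge 2k+1$ a maximum intersecting family of $k$ arcs in a single cyclic order must be $k$ consecutive arcs through a common position) as the crux, but you do not actually carry it out; and the ``local-to-global'' propagation of the common point across cyclic orderings is left at the level of ``a connectivity/transitivity argument''. Both steps can be made rigorous, but as stated they are assertions rather than arguments. If you intend to include a full proof, you should either complete these two steps explicitly or, as you yourself suggest, switch to the shifting/compression route for the uniqueness half, which is more self-contained. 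Since the paper only needs the statement as a tool, simply citing \cite{ekr,w} (and optionally Katona's paper for the short proof of the bound) would match the paper's treatment.
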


\begin{lemma}\label{basic}
  Let $\Omega$ be a finite set with $|\Omega|=n$, and let $\mathcal{L}$ be a Cameron-Liebler class of $k$-sets with parameter $x$ in $\Omega$, with $k\mid n$.
  \begin{enumerate}
    \item The number of $k$-uniform partitions of $\Omega$ equals $\displaystyle\frac{n!}{\left(\frac{n}{k}\right)!(k!)^{\frac{n}{k}}}$.
    \item The number of $k$-sets in $\mathcal{L}$ equals $x\binom{n-1}{k-1}$.
    \item The set $\overline{\mathcal{L}}$ of $k$-subsets of $\Omega$ not belonging to $\mathcal{L}$ is a Cameron-Liebler class of $k$-sets with parameter $\frac{n}{k}-x$.
  \end{enumerate}
\end{lemma}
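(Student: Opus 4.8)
The plan is to dispatch the three parts in order, each by an elementary counting argument. For part (1), I would count an ordered version of the structure and then divide out symmetries: writing the $n$ elements of $\Omega$ in a row gives $n!$ sequences; cutting such a sequence into $\frac{n}{k}$ consecutive blocks of length $k$ produces a $k$-uniform partition, but the same partition arises from $(k!)^{n/k}$ reorderings of the elements within the blocks and $\left(\frac{n}{k}\right)!$ reorderings of the blocks themselves. Dividing yields the claimed value $\frac{n!}{\left(\frac{n}{k}\right)!(k!)^{n/k}}$.

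For part (2), I would use a double count of the incident pairs $(\mathcal{P},S)$, where $\mathcal{P}$ is a $k$-uniform partition of $\Omega$ and $S\in\mathcal{L}$ is one of its blocks. On the one hand, by Definition \ref{clclass} each of the partitions counted in (1) contributes exactly $x$ such pairs. On the other hand, since the symmetric group on $\Omega$ acts transitively on the $k$-subsets of $\Omega$ and sends $k$-uniform partitions to $k$-uniform partitions, every fixed $k$-set $S$ lies in the same number of $k$-uniform partitions, namely the number of $k$-uniform partitions of the $(n-k)$-set $\Omega\setminus S$, which by part (1) equals $\frac{(n-k)!}{\left(\frac{n}{k}-1\right)!(k!)^{\frac{n}{k}-1}}$. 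Equating the two counts and simplifying the resulting factorial expression (using $\binom{n}{k}\frac{k}{n}=\binom{n-1}{k-1}$) gives $|\mathcal{L}|=x\binom{n-1}{k-1}$.

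Part (3) is then immediate: any $k$-uniform partition of $\Omega$ consists of exactly $\frac{n}{k}$ blocks, of which exactly $x$ lie in $\mathcal{L}$; hence exactly $\frac{n}{k}-x$ of them lie in $\overline{\mathcal{L}}$. As this holds for every $k$-uniform partition, $\overline{\mathcal{L}}$ is by Definition \ref{clclass} a Cameron-Liebler class of $k$-sets with parameter $\frac{n}{k}-x$.

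I do not expect a genuine obstacle in this lemma. The only points needing a little care are the symmetry argument in part (2), which guarantees that the number of $k$-uniform partitions through a fixed $k$-set is independent of the chosen $k$-set, and the bookkeeping in the ensuing factorial simplification; everything else is routine.
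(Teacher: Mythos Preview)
Your proposal is correct and follows essentially the same route as the paper: part~(1) by dividing $n!$ by the block and within-block symmetries, part~(2) by the same double count of pairs (partition, block in $\mathcal{L}$), and part~(3) by the obvious complement argument. The only cosmetic difference is that you invoke transitivity of the symmetric group to justify that every $k$-set lies in the same number of partitions, whereas the paper simply writes down that number directly as the count of $k$-uniform partitions of $\Omega\setminus S$.
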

\begin{proof}
  \begin{enumerate}
    \item With every permutation (ordering) $\sigma$ of the $n$ elements of $\Omega$, we can construct a partition $P_{\sigma}$ in the following way: for every $i=1,\dots,\frac{n}{k}$ the elements on the positions $(i-1)k+1,(i-1)k+2\dots,ik$ form a $k$-subset of $\Omega$, and these $\frac{n}{k}$ subsets are pairwise disjoint and form thus a $k$-uniform partition. Now, every partition can arise from $\left(\frac{n}{k}\right)!(k!)^{\frac{n}{k}}$ different permutations as the $\frac{n}{k}$ subsets can be permuted and each of these $k$-subsets can be permuted internally.
    \item We perform a double counting of the tuples $(C,P)$, with $C\in\mathcal{L}$, $P$ a $k$-uniform partition and $C$ a $k$-set in $P$. We find that
    \[
      |\mathcal{L}|\frac{(n-k)!}{\left(\frac{n}{k}-1\right)!(k!)^{\frac{n-k}{k}}}=x\frac{n!}{\left(\frac{n}{k}\right)!(k!)^{\frac{n}{k}}}\quad\Rightarrow\quad|\mathcal{L}|=x\frac{n!\:k}{(n-k)!\:k!\:n}=x\binom{n-1}{k-1}\;.
    \]
    \item Since every $k$-uniform partition of $\Omega$ contains $x$ subsets belonging to $\mathcal{L}$, it contains $\frac{n}{k}-x$ subsets belonging to $\overline{\mathcal{L}}$.\qedhere
  \end{enumerate}
\end{proof}

\begin{example}\label{vb}
  Let $\Omega$ be a finite set with $|\Omega|=n$, and assume $k\mid n$. We give some examples of Cameron-Liebler classes of $k$-sets with parameter $x$. Note that $0\leq x\leq\frac{n}{k}$.
  \begin{itemize}
    \item The empty set is obviously a Cameron-Liebler class of $k$-sets with parameter $0$, and directly or via the last property in Lemma \ref{basic} it can be seen that the set of all $k$-subsets of $\Omega$ is a Cameron-Liebler class of $k$-sets with parameter $\frac{n}{k}$. These two examples are called the \emph{trivial} Cameron-Liebler classes of $k$-sets.
    \item Let $p$ be a given element of $\Omega$. The set of $k$-subsets of $\Omega$ containing $p$ is a Cameron-Liebler class of $k$-sets with parameter $1$. Indeed, in every $k$-uniform partition of $\Omega$ there is exactly one $k$-subset containing $p$.
    \par Again using the last property of Lemma \ref{basic}, we find that the set of all $k$-subsets of $\Omega$ not containing the element $p$ is a Cameron-Liebler class of $k$-sets with parameter $\frac{n}{k}-1$.
  \end{itemize}
\end{example}

In the introduction we already mentioned that many equivalent characterisations for Cameron-Liebler classes in $\PG(3,q)$ are known. In Theorem \ref{char} we show that this is also true for Cameron-Liebler classes of subsets. We did not mention the equivalent characterisations for the Cameron-Liebler sets in $\PG(3,q)$, but they arise as the $q$-analogues of the characterisations in Theorem \ref{char}.
\par Before stating this theorem, we need to introduce some concepts. The \emph{incidence vector} of a subset $A$ of a set $S$ is the vector whose positions correspond to the elements of $S$, with a one on the positions corresponding to an element in $S$ and a zero on the other positions. Below we will use the incidence vector of a family of $k$-subsets of a set $\Omega$: as this family is a subset of the set of all $k$-subsets of $\Omega$, each position corresponds to a $k$-subset of $\Omega$. For any vector $v$ whose positions correspond to elements in a set, we denote its value on the position corresponding to an element $a$ by $(v)_{a}$. The all-one vector will be denoted by $j$.
\par Given a set $\Omega$, we also need the \emph{incidence matrix of elements and $k$-subsets}. This is the $|\Omega|\times\binom{|\Omega|}{k}$-matrix whose rows are labelled with the elements of $\Omega$, whose columns are labelled with the $k$-sets of $\Omega$ and whose entries equal $1$ if the element corresponding to the row is contained in the $k$-set corresponding to the column, and zero otherwise. The \emph{Kneser matrix} or \emph{disjointness matrix} of $k$-sets in $\Omega$ is the $\binom{|\Omega|}{k}\times\binom{|\Omega|}{k}$-matrix whose rows and columns are labelled with the $k$-sets of $\Omega$ and whose entries equal $1$ if the $k$-set corresponding to the row and the $k$-set corresponding to the column are disjoint, and zero otherwise\footnote{This Kneser graph is often introduced as the adjacency matrix of the disjointness graph, but as there is no need here to introduce this graph, we introduced it directly.}.

We will need the following result about the Kneser matrix.

\begin{lemma}\label{eigkneser}
  Let $\Omega$ be a finite set with $|\Omega|=n$ and let $K$ be the Kneser matrix of the $k$-sets in $\Omega$. The eigenvalues of $K$ are given by $\lambda_{j}=(-1)^{j}\binom{n-k-j}{k-j}$, $j=0,\dots,k$, and the multiplicity of the eigenvalue $\lambda_{j}$ is $\binom{n}{j}-\binom{n}{j-1}$.
\end{lemma}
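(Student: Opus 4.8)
This is the classical computation of the eigenvalues of the Kneser graph $K(n,k)$, and I would approach it by exhibiting the common eigenspaces of the Johnson association scheme explicitly. First one may assume $n\geq 2k$: if $n<2k$, then any two $k$-subsets of $\Omega$ intersect, so $K$ is the zero matrix, while every $\lambda_j=(-1)^j\binom{n-k-j}{k-j}$ vanishes for degree reasons, and the statement is trivial. So assume $n\geq 2k$. For $0\leq j\leq k$, let $W_j$ be the inclusion matrix of $j$-subsets versus $k$-subsets of $\Omega$; its row indexed by a $j$-subset $T$ is the incidence vector $w_T$ of $\{S\in\binom{\Omega}{k}: T\subseteq S\}$. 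Let $U_j\subseteq\mathbb{R}^{\binom{n}{k}}$ be the row space of $W_j$, so that $U_0\subseteq U_1\subseteq\cdots\subseteq U_k=\mathbb{R}^{\binom{n}{k}}$. By the classical theorem on the rank of inclusion matrices, $\rk W_j=\binom{n}{j}$ for $0\leq j\leq k$ (this uses $n\geq j+k$, which follows from $n\geq 2k$). Setting $V_j:=U_j\cap U_{j-1}^{\perp}$ (with $U_{-1}:=\{0\}$), we obtain an orthogonal decomposition $\mathbb{R}^{\binom{n}{k}}=V_0\perp V_1\perp\cdots\perp V_k$ with $\dim V_j=\binom{n}{j}-\binom{n}{j-1}$; this will give the multiplicities once we know that each $V_j$ is an eigenspace of $K$ for a single eigenvalue.

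The engine of the proof is the identity $Kw_T=\binom{n-k-j}{k-j}\,\overline{w}_T$ for each $j$-subset $T$, where $\overline{w}_T$ is the incidence vector of the $k$-subsets of $\Omega$ disjoint from $T$: the entry $(Kw_T)_{S'}$ counts the $k$-subsets $S$ with $T\subseteq S$ and $S\cap S'=\emptyset$, which is $0$ unless $T\cap S'=\emptyset$, in which case $S$ is $T$ together with a $(k-j)$-subset of $\Omega\setminus(T\cup S')$, giving $\binom{n-k-j}{k-j}$ choices. Inclusion--exclusion gives $\overline{w}_T=\sum_{T'\subseteq T}(-1)^{|T'|}w_{T'}\in U_j$, so $Kw_T\in U_j$; as the vectors $w_T$ span $U_j$, this shows $KU_j\subseteq U_j$. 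Since $K$ is symmetric it also stabilises $U_{j-1}^{\perp}$, hence each $V_j$; so $K$ is block-diagonal along the decomposition above.

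It remains to check that $K|_{V_j}=\lambda_j\,\mathrm{id}$ with $\lambda_j=(-1)^j\binom{n-k-j}{k-j}$. Fix a $j$-subset $T$ and let $v_T\in V_j$ be the orthogonal projection of $w_T$ onto $V_j$, so that $w_T-v_T\in U_{j-1}$. In the inclusion--exclusion expansion, all terms $w_{T'}$ with $|T'|<j$ lie in $U_{j-1}$, so $\overline{w}_T=(-1)^jw_T+z$ with $z\in U_{j-1}$; combined with the engine identity this reads $Kw_T=\lambda_jw_T+\binom{n-k-j}{k-j}z$. Projecting both sides onto $V_j$ and using that $K$ preserves $V_j$ and $U_{j-1}$ yields $Kv_T=\lambda_jv_T$. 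Finally the vectors $v_T$ span $V_j$, being the images under the orthogonal projection $U_j\to V_j$ of the spanning set $\{w_T:|T|=j\}$ of $U_j$; hence $K$ acts on $V_j$ as the scalar $\lambda_j$. Summing the contributions of $V_0,\dots,V_k$ yields the asserted eigenvalues and multiplicities (if two values $\lambda_i,\lambda_j$ coincide --- as happens for $n=2k$ --- the corresponding multiplicities add up).

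The points that need the most care are the full-rank identity $\rk W_j=\binom{n}{j}$, a nontrivial classical fact about inclusion matrices which is precisely what makes the dimension count come out right, and the passage from ``$\lambda_j$ is an eigenvalue of $K$ on $V_j$'' to ``$K$ acts as the scalar $\lambda_j$ on all of $V_j$'', handled above by the observation that orthogonally projecting a spanning set of $U_j$ onto $V_j$ produces a spanning set of $V_j$. (Alternatively, this last step follows from representation theory: $\mathbb{R}^{\binom{n}{k}}\cong M^{(n-k,k)}=\bigoplus_{j=0}^{k}S^{(n-j,j)}$ with each irreducible $S_n$-module occurring exactly once, $K$ commutes with the $S_n$-action, and Schur's lemma forces $K$ to be scalar on each $V_j$.)
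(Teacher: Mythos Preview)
Your argument is correct: the chain of inclusion matrices $W_j$, the identity $Kw_T=\binom{n-k-j}{k-j}\overline{w}_T$, the inclusion--exclusion expansion of $\overline{w}_T$, and the projection step showing that $K$ acts as the scalar $\lambda_j$ on $V_j$ are all sound, and the rank fact $\rk W_j=\binom{n}{j}$ (valid because $n\geq 2k\geq j+k$) gives the multiplicities. One cosmetic point: in your disposal of the case $n<2k$ the value $\lambda_k=(-1)^k\binom{n-2k}{0}$ need not vanish under the usual convention $\binom{m}{0}=1$, and the stated multiplicity $\binom{n}{k}-\binom{n}{k-1}$ can be negative there, so the lemma as written is really only meaningful for $n\geq 2k$; but this is harmless in the paper's setting, where $k\mid n$.

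As for comparison with the paper: the paper does not give its own proof of this lemma at all. It simply cites Reinfeld for a direct computation and Delsarte and Yamamoto--Fujii--Hamada for proofs via the theory of association schemes, remarking that the Kneser matrix lies in the Bose--Mesner algebra of the Johnson scheme. Your write-up is precisely a self-contained version of the latter approach: the spaces $V_j$ you construct are the common eigenspaces of the Johnson scheme, and your ``engine identity'' together with inclusion--exclusion is exactly how one diagonalises the scheme without invoking the general machinery. So rather than differing from the paper's proof, you have supplied the proof the paper chose to outsource; the added value is that your argument is elementary and does not require the reader to know association schemes, at the cost of quoting the nontrivial full-rank result for inclusion matrices.
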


A direct but lengthy proof of this result can be found in \cite{rei}. The original proofs of this result can be found in \cite[Theorem 4.6]{del} and \cite[Section 2(d)]{yfh}, but both use the theory of association schemes, which we did not introduce in this article. The Kneser matrix is a matrix of the Johnson scheme.
\par Now we can present a theorem with many equivalent characterisations of Cameron-Liebler classes of $k$-subsets.

\begin{theorem}\label{char}
  Let $\Omega$ be a finite set with $|\Omega|=n$, and let $k$ be a divisor of $n$. Let $\mathcal{L}$ be a set of $k$-subsets of $\Omega$ with incidence vector $\chi$. Denote $\frac{|\mathcal{L}|}{\binom{n-1}{k-1}}$ by $x$. Let $C$ be the incidence matrix of elements and $k$-subsets in $\Omega$ and let $K$ be the Kneser matrix of $k$-sets in $\Omega$. The following statements are equivalent. 
  \begin{itemize}
    \item[(i)] $\mathcal{L}$ is a Cameron-Liebler class of $k$-sets with parameter $x$.
    \item[(ii)] $\mathcal{L}$ has $x$ different $k$-subsets in common with every $k$-uniform partition of $\Omega$.
    \item[(iii)] For each fixed $k$-subset $\pi$ of $\Omega$, the number of elements of $\mathcal{L}$ disjoint from $\pi$ equals $(x-(\chi)_{\pi})\binom{n-k-1}{k-1}$.
    \item[(iv)] The vector $\chi-\frac{kx}{n}j$ is contained in the eigenspace of $K$ for the eigenvalue $-\binom{n-k-1}{k-1}$.
    \item[(v)] $\chi\in\row(C)$.
    \item[(vi)] $\chi\in(\ker(C))^{\perp}$.
  \end{itemize}
\end{theorem}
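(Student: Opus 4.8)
The plan is to establish the cycle $(i)\Leftrightarrow(ii)\Rightarrow(iii)\Leftrightarrow(iv)\Leftrightarrow(v)\Rightarrow(ii)$ together with the equivalence $(v)\Leftrightarrow(vi)$. Here $(i)\Leftrightarrow(ii)$ is precisely Definition~\ref{clclass}, and $(v)\Leftrightarrow(vi)$ is the standard linear-algebra fact that $\row(C)=(\ker C)^{\perp}$ over $\mathbb{R}$. The genuinely new input is a description of $\row(C)$ in terms of the spectral decomposition of $K$ from Lemma~\ref{eigkneser}, combined with the combinatorial translations in Lemma~\ref{basic}.

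First I would prove the structural fact that $\row(C)=\langle j\rangle\oplus E$, where $E$ is the eigenspace of $K$ for the eigenvalue $-\binom{n-k-1}{k-1}$. For a point $p\in\Omega$ let $v_{p}$ be the corresponding row of $C$, i.e.\ the incidence vector of the $k$-subsets containing $p$. A direct count shows that, for a fixed $k$-set $S'$, a $k$-set $S$ with $p\in S$ is disjoint from $S'$ in $0$ ways if $p\in S'$ and in $\binom{n-k-1}{k-1}$ ways otherwise; hence $Kv_{p}=\binom{n-k-1}{k-1}(j-v_{p})$. Since $\langle v_{p},j\rangle=\binom{n-1}{k-1}=\frac{k}{n}\binom{n}{k}$, one checks that $v_{p}-\frac{k}{n}j$ is an eigenvector of $K$ for $-\binom{n-k-1}{k-1}$ (which by Lemma~\ref{eigkneser} is $\lambda_{1}$, of multiplicity $n-1$). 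As $j=\frac{1}{k}\sum_{p}v_{p}$, this gives $\row(C)=\langle j\rangle+\mathrm{span}\{v_{p}-\frac{k}{n}j:p\in\Omega\}\subseteq\langle j\rangle\oplus E$; and since $C$ has rank $n$ (indeed $CC^{T}=\binom{n-2}{k-1}I+\binom{n-2}{k-2}J$ is nonsingular), while $\dim(\langle j\rangle\oplus E)=1+(n-1)=n$, equality follows. In particular $\ker(C)=(\row C)^{\perp}$ is the sum of the remaining eigenspaces of $K$.

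Granting this, $(iv)\Leftrightarrow(v)$ is immediate: $\chi\in\langle j\rangle\oplus E$ iff $\chi$ minus its orthogonal projection onto $\langle j\rangle$ lies in $E$, and that projection is $\frac{\langle\chi,j\rangle}{\binom{n}{k}}\,j=\frac{|\mathcal L|}{\binom{n}{k}}\,j=\frac{kx}{n}\,j$. For $(iii)\Leftrightarrow(iv)$, observe that the number of elements of $\mathcal L$ disjoint from a $k$-set $\pi$ is exactly $(K\chi)_{\pi}$, so $(iii)$ says $(K+\binom{n-k-1}{k-1}I)\chi=x\binom{n-k-1}{k-1}\,j$; together with the identity $(K+\binom{n-k-1}{k-1}I)\,j=\frac{n}{k}\binom{n-k-1}{k-1}\,j$ (which follows from $\binom{n-k}{k}=\frac{n-k}{k}\binom{n-k-1}{k-1}$), this is equivalent to $(K+\binom{n-k-1}{k-1}I)\big(\chi-\frac{kx}{n}j\big)=0$, i.e.\ to $(iv)$.

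Finally I would close the loop between the combinatorial and algebraic conditions. For $(v)\Rightarrow(ii)$: write $\chi=\sum_{p}c_{p}v_{p}$ and let $\delta_{P}$ be the incidence vector of a $k$-uniform partition $P$; then $|\mathcal L\cap P|=\langle\chi,\delta_{P}\rangle=\sum_{p}c_{p}\langle v_{p},\delta_{P}\rangle=\sum_{p}c_{p}$, since every element of $\Omega$ lies in exactly one block of $P$, so $|\mathcal L\cap P|$ is independent of $P$, and the double count in Lemma~\ref{basic}(2) forces this constant to be $x$. For $(ii)\Rightarrow(iii)$: fix a $k$-set $\pi$; for any $k$-uniform partition $Q$ of $\Omega\setminus\pi$ (these exist since $k\mid n-k$), the set $Q\cup\{\pi\}$ is a $k$-uniform partition of $\Omega$ meeting $\mathcal L$ in $x$ blocks, exactly one of which is $\pi$ when $(\chi)_{\pi}=1$; hence $\mathcal L_{\pi}:=\{S\in\mathcal L:S\cap\pi=\emptyset\}$ is, inside $\Omega\setminus\pi$, a Cameron-Liebler class of $k$-sets with parameter $x-(\chi)_{\pi}$, and Lemma~\ref{basic}(2) gives $|\mathcal L_{\pi}|=(x-(\chi)_{\pi})\binom{n-k-1}{k-1}$, which is $(iii)$. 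The main obstacle is the structural fact of the second paragraph: it requires both the eigenvector computation for the $v_{p}$ and the rank statement $\rk(C)=n$, and one has to be slightly careful in the degenerate range $n=2k$, where $-\binom{n-k-1}{k-1}$ coincides with several of the $\lambda_{j}$ and the eigenspace in $(iv)$ must be read as the $(n-1)$-dimensional one supplied by Lemma~\ref{eigkneser}. All remaining steps are routine counting or linear algebra.
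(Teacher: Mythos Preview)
Your proposal is correct and follows essentially the same route as the paper: both hinge on the structural identity $\row(C)=\langle j\rangle\oplus E$ (established by showing that the shifted rows $v_{p}-\frac{k}{n}j$ lie in $E$ and then matching dimensions), with the remaining implications handled by routine counting and linear algebra. The organizational differences are minor---you isolate the structural lemma up front, compute $\rk(C)=n$ via $CC^{T}$ where the paper cites Fisher's inequality, argue $(ii)\Rightarrow(iii)$ by restricting to $\Omega\setminus\pi$ where the paper writes out the equivalent double count, and close the cycle with $(v)\Rightarrow(ii)$ via a row expansion where the paper uses the dual kernel argument $(vi)\Rightarrow(ii)$---and your remark about the degenerate case $n=2k$ is a point the paper does not address.
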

\begin{proof}
  If $k=n$, then there is only one $k$-set in $\Omega$. This one $k$-set could be contained in $\mathcal{L}$ (case $x=1$) or not (case $x=0$), but in both cases all the statements are valid. Note that $K$ is the $1\times 1$ zero matrix. From now on we assume that $n\geq2k$.
  \par By Definition \ref{clclass} statements (i) and (ii) are equivalent. Now, we assume that statement (ii) is valid. Let $\pi$ be a fixed $k$-subset of $\Omega$ and denote the number of elements of $\mathcal{L}$ disjoint from $\pi$ by $N$. Double counting the set of tuples $(\sigma,U)$, with $\sigma$ a $k$-set of $\Omega\setminus\pi$ and $U$ a $k$-uniform partition of $\Omega$ containing $\pi$ and $\sigma$, we find
  \begin{align*}
    &N\frac{(n-2k)!}{\left(\frac{n-2k}{k}\right)!(k!)^{\frac{n-2k}{k}}}=\frac{(n-k)!}{\left(\frac{n-k}{k}\right)!(k!)^{\frac{n-k}{k}}}(x-(\chi)_{\pi})\\
    \Leftrightarrow\quad &N=(x-(\chi)_{\pi})\frac{(n-k)!}{(n-2k)!k!}\frac{k}{n-k}=(x-(\chi)_{\pi})\binom{n-k-1}{k-1}\;,
  \end{align*}
  since $U$ contains $x$ elements of $\mathcal{L}$ disjoint from $\pi$ if $\pi\notin\mathcal{L}$, and $x-1$ elements of $\mathcal{L}$ disjoint from $\pi$ if $\pi\in\mathcal{L}$.
  \par If statement (iii) is valid for the set $\mathcal{L}$, then it follows immediately that $K\chi=\binom{n-k-1}{k-1}(xj-\chi)$ since on the position corresponding to the $k$-set $\pi$ the vector $K\chi$ has the number of elements in $\mathcal{L}$ disjoint from $\pi$; the vector $\binom{n-k-1}{k-1}(xj-\chi)$ has $\binom{n-k-1}{k-1}(x-(\chi)_{\pi})$ on this position. We also know that $Kj=\binom{n-k}{k}j$ since for every $k$-set in $\Omega$ there are $\binom{n-k}{k}$ different $k$-sets disjoint to it. We find that
  \begin{align*}
    K\left(\chi-\frac{kx}{n}j\right)&=\binom{n-k-1}{k-1}(xj-\chi)-\frac{kx}{n}\binom{n-k}{k}j\\
    &=-\binom{n-k-1}{k-1}\left(\chi-xj+\frac{n-k}{k}\frac{kx}{n}j\right)=-\binom{n-k-1}{k-1}\left(\chi-\frac{kx}{n}j\right)\;.
  \end{align*}
  Hence, $\chi-\frac{kx}{n}j$ is a vector in the eigenspace of $K$ for the eigenvalue $-\binom{n-k-1}{k-1}$.
  \par We assume that statement (iv) is valid. Let $V$ be the eigenspace of $K$ related to the eigenvalue $-\binom{n-k-1}{k-1}$. Then $\chi\in\left\langle j\right\rangle\oplus V$. By Example \ref{vb} the set of all $k$-sets containing a fixed element $p$ is a Cameron-Liebler set with parameter $1$. Denote the incidence vector of this Cameron-Liebler set by $v_{p}$. Then $v^{t}_{p}$ is the row of $C$ corresponding to $p$. Since (i) implies (iv), through (ii) and (iii), we know that $v^{t}_{p}-\frac{k}{n}j$ is a vector in the eigenspace of $K$ for the eigenvalue $-\binom{n-k-1}{k-1}$. So $v^{t}_{p}\in\left\langle j\right\rangle\oplus V$. It follows that $\row(C)$ is a subspace of $\left\langle j\right\rangle\oplus V$.
  \par The set of all $k$-sets of $\Omega$ forms a $k-(n,k,1)$ block design, hence also a $2-(n,k,\binom{n-2}{k-2})$ block design, whose incidence matrix is $C$. From Fisher's inequality (see \cite{bose,fis}) it follows that $\rk(C)=n$. Using Lemma \ref{eigkneser} we know that $\dim(\left\langle j\right\rangle\oplus V)=n$. Since $\row(C)\subseteq\left\langle j\right\rangle\oplus V$ and $\dim(\row(C))=n=\dim(\left\langle j\right\rangle\oplus V)$, the subspaces $\row(C)$ and $\left\langle j\right\rangle\oplus V$ are equal. Consequently, $\chi\in\row(C)$.
  \par Statements (v) and (vi) are clearly equivalent as $\row(C)=(\ker(C))^{\perp}$.
  \par Finally, we assume that statement (vi) is valid. Let $U$ be a $k$-uniform partition of $\Omega$ and let $\chi_{U}$ be its incidence vector. It is clear that $C\chi_{U}=j$ since each element of $\Omega$ is contained in precisely one element of $U$. Since $Cj=\binom{n-1}{k-1}j$ it follows that $\chi_{U}-\frac{1}{\binom{n-1}{k-1}}j\in\ker(C)$. From the assumption that (vi) is valid, it follows that $\chi$ and $\chi_{U}-\frac{1}{\binom{n-1}{k-1}}j$ are orthogonal. Hence,
  \[
    |\mathcal{L}\cap U|=\left\langle\chi,\chi_{U}\right\rangle=\frac{1}{\binom{n-1}{k-1}}\left\langle\chi,j\right\rangle=\frac{|\mathcal{L}|}{\binom{n-1}{k-1}}=x\;,
  \]
  which proves (ii).
\end{proof}

The main theorem of this paper, Theorem \ref{main}, states that the examples in Example \ref{vb} are the only examples of Cameron-Liebler classes of $k$-sets, in case $n\geq3k$. We note that only four parameter values are admissable. The next lemmata show this result.

\begin{lemma}\label{main1}
  Let $\mathcal{L}$ be a nontrivial Cameron-Liebler class  of $k$-sets with parameter $x$ in a set $\Omega$ of size $n\geq3k$, $x<\frac{n}{k}-1$ and $k\geq2$. Then, $\mathcal{L}$ is the set of all $k$-sets through a fixed element and $x=1$.
\end{lemma}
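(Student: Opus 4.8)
The plan is to bypass the disjointness characterisation and argue directly from characterisation~(v) of Theorem~\ref{char}: because $\mathcal{L}$ is a Cameron--Liebler class, its incidence vector $\chi$ lies in $\row(C)$, so $\chi=C^{t}c$ for some vector $c$ indexed by the elements of $\Omega$; writing $c_{p}=(c)_{p}$, this says precisely that $\sum_{p\in K}c_{p}\in\{0,1\}$ for every $k$-subset $K$ of $\Omega$. Summing this identity over all $k$-subsets $K$ and using Lemma~\ref{basic}(2) gives $\sum_{p\in\Omega}c_{p}=x$. So the statement reduces to classifying the vectors $c$ with this ``$\{0,1\}$-valued $k$-sums'' property, together with their coefficient sums.

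First I would order $\Omega=\{p_{1},\dots,p_{n}\}$ so that $c_{p_{1}}\geq\dots\geq c_{p_{n}}$, and consider the two extreme $k$-sums $M:=c_{p_{1}}+\dots+c_{p_{k}}$ (the largest) and $m:=c_{p_{n-k+1}}+\dots+c_{p_{n}}$ (the smallest); both lie in $\{0,1\}$. If $M=m$ then all $k$-sums coincide, $\chi$ is a constant vector, and $\mathcal{L}$ is trivial; since $\mathcal{L}$ is nontrivial we get $M=1$ and $m=0$. Next I would extract local information by swapping one element at a time: comparing $\{p_{1},\dots,p_{k-1},p_{j}\}$ with $\{p_{1},\dots,p_{k}\}$ forces $c_{p_{j}}\in\{c_{p_{k}}-1,c_{p_{k}}\}$ for every $j\geq k$, and comparing $\{p_{i},p_{n-k+2},\dots,p_{n}\}$ with $\{p_{n-k+1},\dots,p_{n}\}$ forces $c_{p_{i}}\in\{c_{p_{n-k+1}},c_{p_{n-k+1}}+1\}$ for every $i\leq n-k+1$. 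Writing $\alpha:=c_{p_{k}}$ and $\beta:=c_{p_{n-k+1}}$, these two facts give $\alpha-\beta\in\{0,1\}$ and show that every $c_{p_{i}}$ lies in $\{\alpha-1,\alpha,\alpha+1\}$, with value $\alpha+1$ possible only in the first $k-1$ positions and value $\alpha-1$ possible only in the last $k-1$ positions.

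I would then split into the cases $\alpha=\beta$ and $\alpha=\beta+1$. If $\alpha=\beta$, the middle positions all carry $\alpha$; letting $s$ and $t$ be the numbers of positions carrying $\alpha+1$ and $\alpha-1$ respectively, $M=k\alpha+s=1$ and $m=k\alpha-t=0$ give $s+t=1$. The case $(s,t)=(1,0)$ forces $\alpha=0$, so $c$ is a standard basis vector, $\mathcal{L}$ is the set of all $k$-subsets through a fixed element, and $x=\sum_{p}c_{p}=1$ --- exactly the desired conclusion. The case $(s,t)=(0,1)$ forces $\alpha=\tfrac{1}{k}$, so $c=\tfrac{1}{k}j-e_{q}$ for some $q$, $\mathcal{L}$ is the complement of such a star, and $x=\tfrac{n}{k}-1$ --- which is ruled out by the hypothesis $x<\tfrac{n}{k}-1$. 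Finally, if $\alpha=\beta+1$, then $c$ takes only the values $\alpha$ and $\alpha-1$, and (using $n\geq 2k$) at least $k$ positions carry each value; a $k$-set inside the $\alpha$-block has sum $k\alpha=M=1$, so $\alpha=\tfrac{1}{k}$, while a $k$-set inside the $(\alpha-1)$-block has sum $k(\alpha-1)=1-k$, which must lie in $\{0,1\}$ and hence forces $k\leq1$, contradicting $k\geq2$. Thus only the case $(s,t)=(1,0)$ survives, which proves the lemma.

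The main obstacle is the combinatorial bookkeeping in this last paragraph: one must check that all the swap $k$-sets genuinely have size $k$ (this is where $n\geq 2k$, amply supplied by $n\geq 3k$, is used) and that the three candidate values of $c_{p_{i}}$ are correctly confined to the two ends of the sorted list. Once this is set up the two hypotheses do very visible work: $x<\tfrac{n}{k}-1$ eliminates the ``complement of a star'' solution and $k\geq2$ eliminates the degenerate two-valued solution, leaving only the star. (An alternative, more in the spirit of the Erd\H os--Ko--Rado theorem quoted just above, would be to prove directly that $\mathcal{L}$ is intersecting and then combine Theorem~\ref{ekr} with $|\mathcal{L}|=x\binom{n-1}{k-1}$; but establishing intersectingness seems to require an induction on $n$ via the restriction of $\mathcal{L}$ to $\Omega\setminus A$ for $A\in\mathcal{L}$, together with a separate treatment of the borderline value $x=2$, which the linear-algebraic argument above sidesteps entirely.)
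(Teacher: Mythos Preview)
Your argument is correct and genuinely different from the paper's. The paper proceeds combinatorially: it fixes $x$ pairwise disjoint members $H_{1},\dots,H_{x}\in\mathcal{L}$, restricts to each complement $\Omega_{i}=\Omega\setminus\bigcup_{j\neq i}H_{j}$ to obtain a Cameron--Liebler class $\mathcal{S}_{i}$ with parameter~$1$ (hence an intersecting family of maximum size), applies the Erd\H os--Ko--Rado theorem to pin down a point $p_{i}$ for each $i$, then shows by an explicit swap construction that every member of $\mathcal{L}$ meets $\{p_{1},\dots,p_{x}\}$ and finishes with an inclusion--exclusion count forcing $x=1$. Your route instead exploits characterisation~(v) of Theorem~\ref{char}: writing $\chi=C^{t}c$ reduces the problem to classifying real vectors $c\in\mathbb{R}^{\Omega}$ all of whose $k$-sums lie in $\{0,1\}$, which you do by sorting $c$ and comparing adjacent $k$-windows.

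What each approach buys: your proof is self-contained once Theorem~\ref{char} is in hand --- it never invokes Erd\H os--Ko--Rado, and in fact your case analysis \emph{rederives} the EKR classification in the special situation where the intersecting family has exactly $\binom{n-1}{k-1}$ members. It also makes transparent that, for $k\geq2$ and $n>2k$, the only nontrivial Cameron--Liebler classes are the star and the co-star, with the hypothesis $x<\tfrac{n}{k}-1$ entering only at the very end to discard the co-star. The paper's approach, by contrast, keeps the argument entirely within extremal set theory and shows more directly how the Cameron--Liebler problem sits on top of Erd\H os--Ko--Rado; its swap construction is also the set analogue of techniques used for Cameron--Liebler line classes in $\PG(3,q)$. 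One small remark: your proof relies on the equivalence (i)$\Leftrightarrow$(v), whose proof in Theorem~\ref{char} needs the eigenvalue $-\binom{n-k-1}{k-1}$ to occur with multiplicity exactly $n-1$; this holds for $n>2k$ (hence certainly for $n\geq 3k$) but fails at $n=2k$, which is consistent with the many ``exotic'' parameter-$1$ examples noted in the paper's closing remark.
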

\begin{proof}
  It follows immediately from the definition of a Cameron-Liebler class of $k$-sets with parameter $x$ that there are $x$ pairwise disjoint $k$-sets in $\mathcal{L}$. Let $H_{1},\dots,H_{x}\in\mathcal{L}$ be $x$ pairwise disjoint $k$-sets, and let $\mathcal{S}_{i}$ be the set of $k$-sets in $\mathcal{L}$ which are disjoint to $H_{1},\dots,H_{i-1},H_{i+1},\dots,H_{x}$, $i=1,\dots,x$. It is clear that $\mathcal{S}_{i}$ is a set of $k$-sets in $\Omega_{i}=\Omega\setminus(H_{1}\cup\dots\cup H_{i-1}\cup H_{i+1}\cup\dots\cup H_{x})$ and $|\Omega_{i}|= n-(x-1)k$. Actually $\mathcal{S}_{i}$ is a Cameron-Liebler class of $k$-sets with parameter $1$ in $\Omega_{i}$. Hence, the size of $\mathcal{S}_{i}$ equals $\binom{n-(x-1)k-1}{k-1}$ by the second property in Lemma \ref{basic}. Moreover, the elements of $\mathcal{S}_{i}$ mutually intersect. So, we apply Theorem \ref{ekr} for the set $\mathcal{S}_{i}$ of $k$-subsets of $\Omega_{i}$. Since $n-(x-1)k>2k\Leftrightarrow x<\frac{n}{k}-1$, we know that $\mathcal{S}_{i}\subseteq\mathcal{L}$ is the set of all $k$-sets through a fixed element $p_{i}\in\Omega_i$. Necessarily, $p_{i}\in H_{i}$.
  \par We prove that all $k$-sets in $\mathcal{L}$ pass through at least one of the elements $p_{j}$, $1\leq j\leq x$. Assume that $H'\in\mathcal{L}$ and $p_{j}\notin H'$, for all $j=1,\dots,x$. Denote $|H'\cap H_{i}|$ by $k_{i}$, $i=1,\dots,x$. We know that $k'=\sum^{x}_{i=1}k_{i}\leq k$ since the sets $H_{1},\dots,H_{x}\in\mathcal{L}$ are pairwise disjoint. Since $|\Omega\setminus(H'\cup(\cup^{x}_{i=1}H_{i}))|=n-(x+1)k+k'\geq k+k'$, we can find a $k'$-set $J$ in $\Omega\setminus(H'\cup(\cup^{x}_{i=1}H_{i}))$. Let $\{J_{i}\mid i=1,\dots,x\}$ be a partition of $J$, such that $|J_{i}|=k_{i}$. The sets $H'_{i}=(H_{i}\setminus H')\cup J_{i}$, for $i=1,\dots,x$, are pairwise disjoint $k$-sets. Moreover, $p_{i}\in H_{i}\setminus H'\subseteq H'_{i}$ for all $i=1,\dots,x$. Since $H'_{i}$ and $H_{1}\cup\dots\cup H_{i-1}\cup H_{i+1}\cup\dots\cup H_{x}$ are disjoint, and $p_{i}\in H'_{i}$, the set $H'_{i}$ belongs to $\mathcal{L}$. However, then $H'_{1},\dots,H'_{x},H'$ are $x+1$ pairwise disjoint $k$-sets in $\mathcal{L}$, a contradiction. Hence, all elements of $\mathcal{L}$ are $k$-sets through one of the elements $p_{j}$, $j=1,\dots,x$.
  \par There are $\binom{n-1}{k-1}$ different $k$-sets through $p_{j}$, $j=1,\dots,x$. If $x\geq2$, then there are $k$-sets containing at least two of the elements $p_{j}$, $j=1,\dots,x$, since $k\geq2$, and hence the total number of $k$-sets containing one of the elements $p_{j}$, $j=1,\dots,x$, is smaller than $x\binom{n-1}{k-1}$. However, all $k$-sets in $\mathcal{L}$ contain at least one of the elements $p_{j}$, $j=1,\dots,x$, and $|\mathcal{L}|=x\binom{n-1}{k-1}$. We find a contradiction. So, $x=1$ and in this case, $\mathcal{L}$ consists of all $k$-sets through $p_{1}$.
\end{proof}

\begin{lemma}\label{main2}
  Let $\mathcal{L}$ be a Cameron-Liebler class of $k$-sets with parameter $\frac{n}{k}-1$ in a set $\Omega$ of size $n\geq3k$, with $k\geq2$. Then, $\mathcal{L}$ is the set of all $k$-sets not through a fixed element.
\end{lemma}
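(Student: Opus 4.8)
The plan is to reduce immediately to Lemma \ref{main1} by passing to the complementary family. By item~3 of Lemma \ref{basic}, the set $\overline{\mathcal{L}}$ of $k$-subsets of $\Omega$ not lying in $\mathcal{L}$ is again a Cameron-Liebler class of $k$-sets, now with parameter $\frac{n}{k}-\left(\frac{n}{k}-1\right)=1$. So it suffices to show that $\overline{\mathcal{L}}$ is the set of all $k$-sets through a fixed element $p\in\Omega$: taking complements back then gives that $\mathcal{L}$ is exactly the set of $k$-subsets of $\Omega$ avoiding $p$, which is the claim (and this family indeed has parameter $\frac{n}{k}-1$, consistent with Example \ref{vb}).

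To apply Lemma \ref{main1} to $\overline{\mathcal{L}}$ I only need to check its three hypotheses. The size hypothesis $n\geq 3k$ and the condition $k\geq 2$ are inherited directly. For the strict parameter bound, since $k\mid n$ and $n\geq 3k$ we have $\frac{n}{k}\geq 3$, hence the parameter $1$ of $\overline{\mathcal{L}}$ satisfies $1<\frac{n}{k}-1$. Finally, $\overline{\mathcal{L}}$ is nontrivial: a trivial Cameron-Liebler class has parameter $0$ or $\frac{n}{k}$ (the empty family or the family of all $k$-sets), whereas the parameter of $\overline{\mathcal{L}}$ equals $1$, and $1\notin\{0,\frac{n}{k}\}$ again because $\frac{n}{k}\geq 3$.

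With the hypotheses verified, Lemma \ref{main1} yields that $\overline{\mathcal{L}}$ consists of all $k$-subsets of $\Omega$ containing a fixed element $p$, and complementing finishes the proof. The only point requiring any care is the bookkeeping with the parameter of the complement and the two numerical checks $1<\frac{n}{k}-1$ and nontriviality, both of which are forced by $n\geq 3k$; the genuinely substantive ingredient, the Erd\H{o}s--Ko--Rado classification via Theorem \ref{ekr}, has already been absorbed into Lemma \ref{main1}, so no further obstacle arises here.
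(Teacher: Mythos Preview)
Your proof is correct and follows the same approach as the paper: pass to the complement $\overline{\mathcal{L}}$ via Lemma~\ref{basic}(3), apply Lemma~\ref{main1}, and complement back. You are in fact slightly more careful than the paper in explicitly verifying the hypotheses of Lemma~\ref{main1} (nontriviality and the strict inequality $1<\frac{n}{k}-1$), both of which follow from $n\geq 3k$.
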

\begin{proof}
  By the last property of Lemma \ref{basic}, the set $\overline{\mathcal{L}}$ of $k$-subsets of $\Omega$ not belonging to $\mathcal{L}$ is a Cameron-Liebler class of $k$-sets with parameter $1$. By Lemma \ref{main1} $\overline{\mathcal{L}}$ is the set of all $k$-subsets of $\Omega$ containing a fixed element $p$. Consequently, $\mathcal{L}$ is the set of all $k$-subsets of $\Omega$ not containing $p$.
\end{proof}

\begin{proof}[Proof of Theorem \ref{main}]
  This result combines the results of Lemma \ref{main1} and Lemma \ref{main2}.
\end{proof}

We end this paper with the discussion of a few cases that are not covered by the main theorem.

\begin{remark}
  Let $\Omega$ be a set of size $n$, and let $k$ be a divisor of $n$. Theorem \ref{main} does not cover the cases $k=1$, and $n\in\{k,2k\}$. 
  \begin{itemize}
    \item Assume $k=1$, then any set of $x$ different $1$-subsets of $\Omega$ is a Cameron-Liebler class of $k$-sets with parameter $x$. So, in this case each value $x$, with $0\leq x\leq n$, is admissable as parameter of a Cameron-Liebler class.
    \item If $n=k$, there is only one subset of size $k$, and thus all Cameron-Liebler classes of $k$-sets are trivial.
    \item If $n=2k$, each $k$-uniform partition consists of two $k$-sets which are the complement of each other. Every set of $k$-subsets that is constructed by picking one of both $k$-sets from each $k$-uniform partition, is a Cameron-Liebler class of $k$-sets with parameter $1$, equivalently, it is an Erd\H{o}s-Ko-Rado set. There are $2^{\binom{2k-1}{k-1}}$ different choices to pick $\binom{2k-1}{k-1}$ different $k$-sets, but many choices give rise to isomorphic examples. For $k=1,2,3$, there are $1$, $2$ and $11$ nonisomorphic examples, respectively.
  \end{itemize}
\end{remark}

\paragraph*{Acknowledgment:} The research of Maarten De Boeck is supported by the BOF-UGent (Special Research Fund of Ghent University). The research of Andrea \v{S}vob has been partially supported by the Croatian Science Foundation under the project 1637 and has been supported by a grant from the COST project {\em Random network coding and designs over GF(q)} (COST IC-1104). The authors thank the referees for their suggestions for improvements to the original version of this article.

\end{document}